\begin{document}
\theoremstyle{plain}
\newtheorem{Thm}{Theorem}
\newtheorem{Cor}[Thm]{Corollary}
\newtheorem{Ex}[Thm]{Example}
\newtheorem{Con}[Thm]{Conjecture}
\newtheorem{Lem}[Thm]{Lemma}
\newtheorem{Prop}[Thm]{Proposition}

\theoremstyle{definition}
\newtheorem{Def}[Thm]{Definition}
\newtheorem{Note}[Thm]{Note}
\newtheorem{Question}[Thm]{Question}

\theoremstyle{remark}
\newtheorem{notation}[Thm]{Notation}
\renewcommand{\thenotation}{}

\errorcontextlines=0
\renewcommand{\rm}{\normalshape}%
\newcommand{\transv}{\mathrel{\text{\tpitchfork}}}
\makeatletter
\newcommand{\tpitchfork}{%
  \vbox{
    \baselineskip\z@skip
    \lineskip-.52ex
    \lineskiplimit\maxdimen
    \m@th
    \ialign{##\crcr\hidewidth\smash{$-$}\hidewidth\crcr$\pitchfork$\crcr}
  }%
}
\makeatother

\title[The three obdurate conjectures of differential geometry]{The three obdurate conjectures \\ of differential geometry}

\author{Brendan Guilfoyle}
\address{Brendan Guilfoyle\\
          School of Science Technology Engineering and Mathematics\\
          Munster Technological University, Kerry\\
          Tralee\\
          Co. Kerry\\
          Ireland.}
\email{brendan.guilfoyle@mtu.ie}
\author{Wilhelm Klingenberg}
\address{Wilhelm Klingenberg\\
 Department of Mathematical Sciences\\
 University of Durham\\
 Durham DH1 3LE\\
 United Kingdom}
\email{wilhelm.klingenberg@durham.ac.uk }

\begin{abstract}
We explore the role of symmetry in three obdurate conjectures of differential geometry: the Carath\'eodory, the Willmore and the Lawson Conjectures. All three Conjectures concern surfaces in 3-dimensional space-forms, which have a high degree of symmetry. It is shown that this symmetry is broken and more general ambient metrics are considered, none of the Conjectures continue to hold. 

The subtle manner in which symmetry enters the first Conjecture is also explained in detail. 
\end{abstract} 
\keywords{Carath\'eodory  Conjecture, Willmore Conjecture, Lawson Conjecture, Symmetry}
\subjclass[2010]{53A05, 35K51, 32V40}

\date{\today}
\maketitle
In a 1980s UC Berkeley lecture \cite{KMS16}, Robert Osserman identified what he referred to as the three obdurate conjectures of differential geometry, all of which were open at that time: the Lawson Conjecture \cite{Lawson70}, which was proven in 2014 by {Marques} and Neves \cite{MaN14}, the Willmore Conjecture \cite{Will65} which was proven in 2013 by Brendle \cite{Br13}, and the Carath\'eodory Conjecture, whose proof has been published by the authors in \cite{GK11} (see \cite{GK19} \cite{GK20} \cite{GK24}).

The three Conjectures were:

\vspace{0.1in}
\begin{itemize}
    \item[]{\bf Carath\'eodory Conjecture (1924):} \textit{Every strictly convex surface in ${\mathbb R}^3$ contains at least two umbilic points ({points} at which the principal curvatures are equal)}.
    \item[]
    \item[]{\bf Willmore Conjecture (1969{):}} \textit{The Willmore energy of an embedded torus $T$ in ${\mathbb S}^3$ satisfies 
    \vspace*{-0.05in} 
    $${\mathcal W}(T)\equiv\iint_T 1+H^2 d\mu\geq 2\pi^2,$$
    \vspace*{-0.1in} 
   where $H$ is the mean curvature of $T$.}
    \item[]
    \item[]{\bf Lawson Conjecture (1970):} \textit{Up to ambient isometry, the only embedded minimal torus in ${\mathbb S}^3$ is the Clifford torus}.
\end{itemize} 
\vspace{0.1in}

All three Conjectures are concerned with surfaces embedded in  3-manifolds of constant curvature (namely ${\mathbb R}^3$ or ${\mathbb S}^3$) and each has a sizeable history, which we will refrain from entering. 

The purpose of this note is, rather, to demonstrate that the underlying reason the Conjectures are true is the size of the ambient isometry groups.  This is done by constructing metrics close to the constant curvature metric, along with embedded surfaces for which the Conjectures do not hold. Perhaps it should come as no surprise that the remaining unresolved conjectures in such an old field involve properties that are unique to the geometric setting. 

The role of symmetry is most subtle in the case of the Carath\'eodory Conjecture and this may well explain its longevity. In Section \ref{s:1.1a} we describe the symmetry argument that underlies the reason that the Carath\'eodory Conjecture is true. Section \ref{s:1.2} turns to the parabolic evolution equations that prove the existence of solutions to the $\bar{\partial}$-equation in certain pseudo-K\"ahler manifolds, thus leading to the proof of the Conjecture when compared with the symmetry properties.

\vspace{0.1in}
\section{The Symmetry of the Ambient Spaces}\label{s:1.1}

\begin{Thm}\label{t:0}
    There exist Riemannian metrics $g$ on ${\mathbb R}^3$ and ${\mathbb S}^3$ that are arbitrarily close to the constant curvature metric $g_0$ in the $C^0$ or $L_2$ norm:
    \[
    \|g-g_{0}\|^2\leq\epsilon \qquad{\mbox{ for } } \epsilon>0,
    \]
    and smoothly embedded surfaces for which none of the three Conjectures hold.
\end{Thm}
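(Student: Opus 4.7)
The plan is to handle each of the three conjectures separately, producing in each case a $C^k$-small perturbation of the ambient constant-curvature metric together with a smooth embedded surface violating the corresponding conjecture in the perturbed geometry. The guiding heuristic is that each conjecture's rigidity is enforced by the large ambient isometry or conformal group, which is destroyed by a generic small perturbation of the metric.

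For the Willmore Conjecture the argument is essentially linear. Take the Clifford torus $T_C \subset \mathbb{S}^3$, which is minimal in the round metric $g_0$ and satisfies $\mathcal{W}_{g_0}(T_C) = \mathrm{Area}(T_C) = 2\pi^2$. For a one-parameter family $g_t = g_0 + t h$ with $h$ a smooth symmetric $2$-tensor of compact support, one has $H_{g_t} = O(t)$ and hence $H_{g_t}^2 = O(t^2)$, so the first-order variation of the Willmore energy reduces to the first variation of area:
\[
\frac{d}{dt}\bigg|_{t=0}\mathcal{W}_{g_t}(T_C) \;=\; \frac{1}{2}\int_{T_C}\mathrm{tr}_{g_0}\bigl(h|_{TT_C}\bigr)\,d\mu.
\]
Choosing $h$ supported in a small neighborhood of a point of $T_C$ with strictly negative tangential trace yields $\mathcal{W}_{g_t}(T_C) < 2\pi^2$ for small $t>0$, while $T_C$ itself remains smoothly embedded.

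For the Lawson Conjecture, I would perturb the round metric in a neighborhood of a simple closed curve $\gamma_0 \subset \mathbb{S}^3$ that is not a great circle, using the implicit function theorem to arrange that a nearby curve $\gamma$ becomes a closed geodesic in the perturbed metric. A tubular-neighborhood analysis then produces an embedded torus of zero mean curvature at an appropriate radius around $\gamma$, giving an embedded minimal torus of different intrinsic area from $T_C$. If the support of the perturbation is chosen disjoint from $T_C$, the Clifford torus itself remains minimal in the perturbed metric, so one obtains two non-congruent embedded minimal tori in the same ambient space.

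The most delicate case is Carath\'eodory, because the index formula for principal line fields on a convex topological $S^2$ forces the umbilic set to have total index $2$ regardless of the ambient metric. The plan is therefore to produce a strictly convex surface whose umbilic locus is a single point of index $2$. Starting from a triaxial ellipsoid with four Darbouxian umbilics of index $1/2$, I would apply a controlled sequence of localized perturbations of the Euclidean metric, each shifting the trace-free part of the induced shape operator so as to merge pairs of umbilics; iterating collapses the umbilic set to a single index-$2$ point. The main obstacle is that the map from ambient metric perturbations to perturbations of the trace-free second fundamental form is not pointwise surjective, so one must solve a nontrivial local model problem while simultaneously preserving strict convexity and smooth embeddedness of the surface; this is where symmetry-breaking gives the essential flexibility.
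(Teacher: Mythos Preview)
Your Willmore argument is correct and in the same spirit as the paper's: both fix the Clifford torus and perturb the ambient metric so that $T_C$ remains (nearly) minimal while its area drops below $2\pi^2$. The paper does this by an explicit off-diagonal deformation in Hopf coordinates,
\[
g_\epsilon = d\rho^2 + \sin^2\rho\,d\theta_1^2 + \cos^2\rho\,d\theta_2^2 + 2\epsilon\sin\rho\cos\rho\,d\theta_1 d\theta_2,
\]
under which $T_C=\{\rho=\pi/4\}$ stays \emph{exactly} minimal with area $2\pi^2\sqrt{1-\epsilon^2}$; your first-variation argument is more flexible but less explicit. Your Carath\'eodory plan is likewise the approach the paper cites from \cite{G20}: fix a convex sphere, deform only the ambient metric so as to control the trace-free second fundamental form, and force the umbilic set to collapse to a single point. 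You have correctly flagged where the genuine work lies.

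The Lawson case, however, has a real gap. The assertion that ``a tubular-neighborhood analysis then produces an embedded torus of zero mean curvature at an appropriate radius around $\gamma$'' is not justified. Distance tubes around a closed geodesic are minimal only under strong symmetry: in the round $\mathbb{S}^3$ the tubes $T_r$ about a great circle have \emph{constant} mean curvature because the isometry group acts transitively on each $T_r$, and exactly one radius ($r=\pi/4$) gives $H\equiv 0$. Once you perturb the metric to make some non-great curve $\gamma$ geodesic you destroy precisely that transitive symmetry, so the mean curvature of $T_r(\gamma)$ is no longer constant along the tube; an intermediate-value argument in $r$ yields at best a tube on which $H$ changes sign, not one with $H\equiv 0$. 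Producing a second embedded minimal torus in a nearly-round metric by hand is genuinely hard. The paper sidesteps the construction entirely: it invokes the Marques--Neves--Song existence theory \cite{MaN17,Song23} to obtain infinitely many embedded minimal surfaces in any closed Riemannian $3$-manifold, and then observes that after a generic small perturbation the ambient isometry group is trivial, so no two of these can be congruent.
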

\begin{proof}
    That the Carath\'eodory Conjecture may not hold for nearly Euclidean metrics was established in Theorem 2 of \cite{G20}:  \textit{for all $\epsilon>0$, there exists a smooth Riemannian metric $g$ on ${\mathbb R}^3$ and a smooth strictly convex 2-sphere $S\subset{\mathbb R}^3$ such that $S$ has a single umbilic point and $\|g-g_{0}\|^2\leq\epsilon$.} Here $g_0$ is the flat metric on ${\mathbb R}^3$ and $\|,\|$ is the $L_2$-norm.

    The metrics are obtained by fixing a round sphere  $S$ in flat ${\mathbb R}^3$ and then deforming the Euclidean metric, leaving the sphere fixed in such a manner that the metric induced on $S$ is preserved. The curvature introduced deforms only the second fundamental form and the perturbations are sufficient to control the principal foliation of $S$ and create a single umbilic point. Bump functions with support in a collared neighbourhood of $S$ bring the metric $L_2$ close to $g_0$.

    The following argument shows that the same is true for the Willmore Conjecture: \textit{for all $\epsilon>0$, there exists a smooth Riemannian metric $g$ on ${\mathbb S}^3$ and a smoothly embedded torus $T$ such that the Willmore energy of $T$ with respect to $g$ satisfies 
    \[
    {\mathcal W}(T,g)\equiv\iint_T 1+H^2 d\mu_g<2\pi^2,
    \]
    and $\|g-g_{1}\|^2\leq\epsilon$.} Here $g_1$ is the round metric on ${\mathbb S}^3$ and $H$ is the mean curvature of $T$. To this end, consider the following perturbation $g_\epsilon$ of the round metric on ${\mathbb S}^3$ in Hopf coordinates $(\rho,\theta_1,\theta_2)$:
    \[
    g_\epsilon=d\rho^2+\sin^2\rho\; d\theta_1^2+\cos^2\rho \;d\theta_2^2+2\epsilon\sin\rho \cos\rho\; d\theta_1d\theta_2, \qquad\qquad   {0\leq\epsilon<1}.
    \]
    For $\epsilon=0$ this is the round metric, and the embedded torus given by $\rho=\frac{\pi}{4}$ is a Clifford torus which we denote $T$. Now deform the metric $g$ by allowing $\epsilon>0$ and keep the torus $T$ fixed. {For $\epsilon<1$ the metric remains Riemannian.}
    
    The unit normal vector to $T$ is clearly the gradient of $N=d\rho$ and so the induced metric is
    \[
    g|_T={\textstyle{\frac{1}{2}}}\left[ d\theta_1^2+2\epsilon d\theta_1 d\theta_2+d\theta_2^2\right],
    \]
    with area form $d\mu={\textstyle{\frac{1}{2}}}\left( 1-\epsilon^2\right)^{\scriptstyle{\frac{1}{2}}} d\theta_1 d\theta_2$.

    Moreover, the second fundamental form is
    \[
    A_{ij}=\nabla_iN_j=-\Gamma^{\rho}_{ij}={\textstyle{\frac{1}{2}}}\left. \frac{\partial g_{ij}}{\partial\rho}\right|_{\rho=\pi/4}=\left[\begin{matrix}
        1 & 0 \\
        0 & -1
    \end{matrix}\right].
    \]
    Thus, under this deformation $T$ remains minimal $(H=g^{ij}A_{ij}=0)$ and for $\epsilon>0$
    \[
    {\mathcal W}(T,g)\equiv\iint_T 1+H^2 d\mu_g=\frac{1}{2}(1-\epsilon^2)^{\scriptstyle{\frac{1}{2}}}\int_0^{2\pi}d\theta_1\int_0^{2\pi}d\theta_2=2(1-\epsilon^2)^{\scriptstyle{\frac{1}{2}}}\pi^2<2\pi^2,
    \]
     as claimed. While $g$ is only $C^0$ close to the round metric, this can be extended to $L_2$ by using bump functions in a neighbourhood of $T$ {as follows.

Introduce a function $\Psi:{\mathbb R}\rightarrow{\mathbb R}$ into the metric so that
 \[
    g_\epsilon=d\rho^2+\sin^2\rho\; d\theta_1^2+\cos^2\rho \;d\theta_2^2+2\epsilon \Psi(\rho)\sin\rho \cos\rho\; d\theta_1d\theta_2, \quad\quad  0\leq\epsilon<1.
\]
where $\Psi$ is defined:
\[
\Psi(\rho)=\left\{ \begin{matrix}
1&\qquad\qquad {\mbox{for }}|\rho-\pi/4|\leq \epsilon/4\\
f(\rho)&\qquad\qquad {\mbox{for }}\epsilon/4\leq|\rho-\pi/4|\leq \epsilon/2\\
0&\qquad\qquad {\mbox{for }} \epsilon/2\leq|\rho-\pi/4|
\end{matrix}\right.,
\]
\vspace{0.1in}
where $f(\rho)$ is a smooth bump function with $0\leq f(\rho)\leq1$. 

Clearly the torus $T$ given by $\rho=\pi/4$ has the same Willmore energy as before, namely ${\mathcal W}(T,g_\epsilon)=2(1-\epsilon^2)^{\scriptstyle{\frac{1}{2}}}\pi^2$ and the $L_2$ difference between the flat metric and this bumped metric is
\[
\|g-g_{\epsilon}\|^2=2\iiint_{{\mathbb S}^3}|g-g_{\epsilon}|^2d^3V
=4\epsilon^2\int_0^{2\pi}d\theta_1\int_0^{2\pi}d\theta_2\int_{\pi/4-\epsilon/2}^{\pi/4+\epsilon/2}\Psi(\rho)^2d\rho \leq16\pi^2\epsilon^3,\nonumber
\]
which can be made arbitrarily small.}

     Finally, the Lawson Conjecture trivially fails in the absence of symmetry, since from the work of Marques, Neves and Song \cite{MaN17} \cite{Song23}, every closed 3-manifold admits an infinite number of embedded minimal tori, and without ambient isometries, obviously these minimal surfaces are non-isometric.
\end{proof}
\vspace{0.1in}

The above arguments can be extended to show that Hamburger's real analytic index bound {$i(p)\leq1$ for any isolated umbilic point $p$,} can also be violated by symmetry breaking  \cite{G20} \cite{Ham41}.

\vspace{0.1in}
\section{The Carath\'eodory Conjecture}\label{s:1.1a}
The central geometric idea of the proof is that the Carath\'eodory Conjecture can be stated and proven using the following \cite{GK19} \cite{GK20} \cite{GK24}: an oriented surface $S$ in ${\mathbb R}^3$ gives rise through its oriented normal lines to a surface $\Sigma$ in the 4-dimensional space of all oriented lines, which can be identified with the total space $T{\mathbb S}^2$ of the tangent bundle of the 2-sphere. If $S$ is closed and strictly convex, then $\Sigma$ is the graph of a global section of the bundle $\pi: T{\mathbb S}^2\rightarrow {\mathbb S}^2$, where $\pi$ takes an oriented line to its direction. 

Moreover, there are natural Euclidean-invariant geometric structures on $T{\mathbb S}^2$, including a symplectic form (a closed non-degenerate 2-form) $\omega$ and a complex structure ${\mathbb J}$ (an endomorphism of the tangent space that squares to minus the identity). The surface $\Sigma\subset T{\mathbb S}^2$ is Lagrangian ($\omega|_\Sigma=0$) and an umbilic point on $S$ corresponds to a complex point on $\Sigma$ - a point where the ambient complex structure preserves the tangent space of $\Sigma$. Thus we see that being umbilic is a property of the normal lines: a point on a surface is umbilic iff points on all parallel surfaces are umbilic. 

One is led to the following reformulation of the Carath\'eodory Conjecture: \textit{every global Lagrangian section of $T{\mathbb S}^2$ contains at least two complex points}. The aspect that puts this outside the standard framework of K\"ahler geometry is that the complex structure and the symplectic structure, while compatible:
\[
\omega(\cdot,\cdot)=\omega({\mathbb J}\cdot,{\mathbb J}\cdot),
\] 
define \textit{opposite} orientations on the ambient space. 

Thus the associated metric ${\mathbb G}(\cdot,\cdot)=\omega({\mathbb J}\cdot,\cdot)$ on $T{\mathbb S}^2$ is not Riemannian: it is pseudo-K\"ahler of neutral signature $(2,2)$. In the language of Gromov, the complex structure is not tamed by the symplectic structure - if it did, it would be impossible to have any complex points on a Lagrangian surface!  Indeed, invariant metrics on spaces of submanifolds are generally pseudo-Riemannian and so this is unavoidable \cite{AGK11} \cite{GK05} \cite{Salvai05} \cite{Salvai07}. 
 
In this pseudo-K\"ahler case, the remarkable fact emerges that the dimension of the space of graphical holomorphic discs with boundary lying on a fixed Lagrangian local section $\Sigma\subset T{\mathbb S}^2$ is related to the number of complex points contained within the intersection on $\Sigma$, and therefore, to the number of umbilic points contained on the underlying surface $S\subset{\mathbb R}^3$. More precisely, the linearization of the $\bar{\partial}$-operator has finite kernel and cokernel, and index given by
\[
I={\mbox{dim ker }}\nabla \bar{\partial}-{\mbox{dim coker }}\nabla \bar{\partial}=\mu(T{\mathbb S}^2,T\Sigma)+2,
\]
where $\mu$ is the Keller-Maslov index of the boundary curve. In the case of a section, this counts the number of umbilic points through the formula {(Proposition 11 of} \cite{GK04})
\[
\mu(T{\mathbb S}^2,T\Sigma)=4i,
\]
where $i\in{\mathbb Z}/2$ is the sum of the umbilic indices inside the boundary curve. 

In the K\"ahler case, a compact Lagrangian surface $\Sigma$ is totally real and so is a good boundary condition for the $\bar{\partial}$-operator, and it can be shown to be Fredholm regular {(Theorem I of} \cite{Oh96}). That is, under a small perturbation of the boundary condition $\Sigma$, the cokernel of the linearization vanishes and the Keller-Maslov class determines the dimension of the space of holomorphic discs. In the pseudo-K\"ahler case, being Lagrangian does not imply being totally real - indeed, for topological reasons an embedded sphere \textit{must} have at least one complex point. The existence of complex points on $\Sigma$ poses both a difficulty and an added richness not seen in the K\"ahler setting. 

The Euclidean group acts isometrically on the pseudo-K\"ahler structure and if a Lagrangian section $\Sigma$ has a single complex point, then quotienting out by the stabilizer subgroup of the point, it is again Fredholm regular {(Theorem 1.1 of} \cite{GK20}). There should therefore be no holomorphic discs with boundary lying on a totally real Lagrangian section - in particular, on a totally real Lagrangian hemisphere. In this case the umbilic index sum is $i=0$, and the space of unparameterized holomorphic discs (subtracting the dimension of the M\"obius group of the disc) is $I-3=\mu-1=4i-1=-1$. 

Proving the existence of non-trivial holomorphic discs with boundary lying on any totally real Lagrangian hemisphere would therefore disprove the existence of a counter-example to the Carath\'eodory Conjecture. Precisely the same argument establishes a Conjecture of Toponogov \cite{GK24} \cite{Top95} which states that\textit{ the curvatures $\kappa_1,\kappa_2$ of a complete embedded plane $P$ in ${\mathbb R}^3$ satisfy}
\[
\inf_{p\in P}|\kappa_1(p)-\kappa_2(p)|=0.
\]
Toponogov proves that the Gauss image of such a plane is a hemisphere and thus umbilic-free hemispheres arise naturally in this context, and existence of a holomorphic disc attached to any totally real Lagrangian hemisphere also proves Toponogov's Conjecture. The global symmetry argument is slightly easier in this case, as a putative counterexample would be totally real, even at infinity.

Furthermore, a topological extension of these arguments leads to the local index bound {$i(p)<2$ for any isolated umbilic point $p$} \cite{GK24a}. There, the global section $\Sigma$ of $T{\mathbb S}^2$ is replaced by a 2-sphere with crosscaps to cancel hyperbolic complex points. It is a remarkable fact that this yields a bound on the umbilic index $i<2$, a bound that is weaker than the real analytic index bound of Hamburger \cite{Ham41}.

\vspace{0.1in}
\section{Holomorphic Discs}\label{s:1.2}

In a 4-dimensional K\"ahler manifold $(M,\omega,{\mathbb J},{\mathbb G})$, holomorphic surfaces are minimal: their mean curvature vector vanishes. This is also true in the pseudo-K\"ahler setting, {where surfaces with vanishing mean curvature are called maximal. In this case the metric induced on a surface can be degenerate and defining the mean curvature vector problematic. }

The fundamental difference between the definite ($\varepsilon=1$) and neutral ($\varepsilon=-1$) metrics is seen in the Wirtinger identities:
\[
\omega(X_1,X_2)^2+\varepsilon {\mbox{ det }}[X_1,X_2,{\mathbb J}X_1,{\mathbb J}X_2]={\mbox{ det }}{\mathbb G}(X_i,X_j).
\]
{In the neutral case the metric induced on the tangent plane to a surface $\Sigma\subset M$ can be positive definite, negative definite, Lorentz, degenerate or totally null. A Lagrangian plane must be Lorentz  or totally null, while a holomorphic plane must be definite or totally null. Totally null planes are both Lagrangian and holomorphic, and arise at complex points on Lagrangian surfaces and at Lagrangian points on holomorphic surfaces \cite{GKR10}.} For more details on submanifolds in pseudo-Riemannian spaces see \cite{Anciaux11}, for the specific case of surfaces in neutral 4-manifolds see \cite{GK05} \cite{GK08}.

To prove that there exist holomorphic discs with boundary lying on any Lagrangian totally real hemisphere, start with a definite disc and evolve it using mean curvature flow, the objective being to converge to a disc that is not only maximal, but holomorphic. The challenges faced by such an approach include: 
\begin{itemize}
    \item[1.] the codimension is two and therefore the flow is a system of equations, 
    \item[2.] since the symplectic form is globally exact $\omega=d\Phi$, the area of a smooth holomorphic disc $D$ with boundary on a Lagrangian sphere $\Sigma$ is zero:
    \[
    \iint_DdA=\iint_D\omega=\int_{\partial D} \Phi=\iint_\Sigma \omega=0,
    \]
    \item[3.] the evolving disc must remain definite throughout the flow,
    \item[4.] appropriate parabolic boundary conditions must be implemented,
    \item[5.] the boundary of the flowing surface must remain in the hemisphere,
    \item[6.] the flow must converge (albeit weakly) to a holomorphic, rather than maximal, disc.
\end{itemize}

These challenges are dealt with in the following ways. Regarding Challenge 1., prior to 2008 higher codimension mean curvature had not been extensively considered \cite{AmbS97} \cite{ChenLi01}  \cite{LiS09}, but subsequent work has shown it to be tractable - see for example \cite{AaS13} \cite{HaS24} \cite{Liu18} \cite{LaN24} \cite{Naff23} and the survey paper \cite{Smoc11}. 

In the Riemannian case most of the attention in mean curvature flow is on the singularities that inevitably form due to the shrinking nature of the motion. In the indefinite case, however, the area of a positive surface \textit{increases} under mean curvature flow, so the evolution tends to behave better if it can be constrained to a compact region. Indeed, when flowing positive graphs regularity theory is simpler and more natural in the pseudo-Riemannian setting than in the Riemannian one \cite{LiS11}. 

Given a smooth umbilic-free hemisphere, that is, a surface $S$ without umbilic points with Gauss area a hemisphere, and the associated totally real Lagrangian hemisphere $\Sigma\subset T{\mathbb S}^2$, 
consider then a family of positive sections $f_t:D\rightarrow T{\mathbb S}^2$ such that
\[
\frac{d f}{dt}^\bot=H,
\]
with initial and boundary conditions:
\begin{enumerate}
\item[(i)] $f_0(D)=D_0,$
\item[(ii)]$f_t(\partial D)\subset \tilde{\Sigma}$,
\item[(iii)] the hyperbolic angle $B$ between $Tf_t(D)$ and $T\tilde{\Sigma}$ is constant along $f_t(\partial D)$,
\item[(iv)] $f_t(\partial D)$ is asymptotically holomorphic: $|\bar{\partial}f_t|=C/(1+t)$,
\end{enumerate} 
\vspace{0.1in}
where $H$ is the mean curvature vector of $f_t(D)$, and $D_0$ and $\tilde{\Sigma}$ are given positive sections, and $C$ a non-zero constant.

The surface $\tilde{\Sigma}$ is obtained by adding a linear holomorphic twist to $\Sigma$ centred at its north pole. This contactification of the disc is precisely what is required to make the induced metric definite on the deformed surface $\tilde{\Sigma}$ {(Proposition 5.6 of} \cite{GK24}).  As the deformation is holomorphic, it does not introduce complex points and the maximum domain it can positivize is an open hemisphere. Thus, while we have addressed Challenge 2. by ensuring we can attach a definite disc to start, in fact we get an area formula for any smooth holomorphic disc with boundary lying on the twisted surface.

Given an initial positive disc $D_0$, Challenge 3. is dealt with by a priori estimates using the maximum principle both in the interior and at the boundary - this ensures that the flow remains parabolic {(Theorem 6.1 of} \cite{GK24}). The boundary conditions consist of Dirichlet conditions (ii) and Neumann conditions (iii) and (iv). Constant angle conditions have been considered for mean curvature flow \cite{Alt96} \cite{Freire10} \cite{Stahl96a} \cite{Stahl96b}, but only in codimension one and not between intersecting positive surfaces in a neutral 4-manifold. We are free to chose the size of this hyperbolic angle and by choosing it large enough we can increase the stickiness of the boundary and stop it from running off the edge of the hemisphere {(Theorem 6.3 of } \cite{GK24}), which is Challenge 5.

The condition (iv) says that the flowing surface becomes holomorphic at the boundary as $t\rightarrow\infty$ and this ensures that the flow converges to a holomorphic rather than just a maximal disc - Challenge 6.  These result ultimately in the flow weakly converging to a holomorphic disc with boundary lying on an arbitrary Lagrangian hemisphere - Theorem 6.17 of \cite{GK24}.

Comparing this with the Fredholm property in Theorem 1.1 of \cite{GK20} which relies on the ambient symmetry, we conclude that a totally real Lagrangian hemisphere cannot be closed with the creation of a single complex point, which proves the Carath\'eodory Conjecture.

\vspace{0.1in}
%

\end{document}